\newtheorem{theorem}{Theorem}[section]
\newtheorem{lemma}[theorem]{Lemma}
\newtheorem{proposition}[theorem]{Proposition}
\newtheorem{corollary}[theorem]{Corollary}
\theoremstyle{definition}
\newtheorem{example}[theorem]{Example}
\newtheorem{remark}[theorem]{Remark}
\begin{document}

\title[Invariant theory of relatively free algebras]
{Invariant theory of relatively free\\
right-symmetric and Novikov algebras}

\author[Vesselin Drensky]
{Vesselin Drensky}
\address{Institute of Mathematics and Informatics,
Bulgarian Academy of Sciences,
1113 Sofia, Bulgaria}
\email{drensky@math.bas.bg}
\thanks
{Partially supported by Grant I02/18
``Computational and Combinatorial Methods
in Algebra and Applications''
of the Bulgarian National Science Fund.}

\subjclass[2010]
{17A36; 17A30; 17A50; 15A72.}
\keywords{Free right-symmetric algebras, free Novikov algebras, noncommutative invariant theory.}

\maketitle

\centerline{\it Dedicated to Askar Dzhumadil'daev on the occasion of his 60th birthday}

\begin{abstract}
Algebras with the polynomial identity
$(x_1,x_2,x_3)=(x_1,x_3,x_2)$, where $(x_1,x_2,x_3)=x_1(x_2x_3)-(x_1x_2)x_3$ is the associator, are called right-symmetric.
Novikov algebras are right-symmetric algebras satisfying additionally the polynomial identity $x_1(x_2x_3)=x_2(x_1x_3)$.
We consider the free right-symmetric algebra $F_d({\mathfrak R})$ and the free Novikov algebra $F_d({\mathfrak N})$ freely generated by
$X_d=\{x_1,\ldots,x_d\}$ over a filed $K$ of characteristic 0.
The general linear group $GL_d(K)$ with its canonical action on the $d$-dimensional vector space $KX_d$
acts on $F_d({\mathfrak R})$ and $F_d({\mathfrak N})$ as a group of linear automorphisms.
For a subgroup $G$ of $GL_d(K)$ we study the algebras of $G$-invariants $F_d({\mathfrak R})^G$ and $F_d({\mathfrak N})^G$.
For a large class of groups $G$ we show that the algebras $F_d({\mathfrak R})^G$ and $F_d({\mathfrak N})^G$ are never finitely generated.
The same result holds for any subvariety of the variety $\mathfrak R$ of right-symmetric algebras which contains the subvariety $\mathfrak L$ of
left-nilpotent of class 3 algebras in $\mathfrak R$.
\end{abstract}


\section*{Introduction}
In this paper we fix a field $K$ of characteristic 0 and consider nonassociative $K$-algebras.
An algebra $A$ is called {\it right-symmetric}
if it satisfies the polynomial identity
\begin{equation}\label{right-symmetric identity}
(x_1,x_2,x_3)=(x_1,x_3,x_2),
\end{equation}
where $(x_1,x_2,x_3)=x_1(x_2x_3)-(x_1x_2)x_3$ is the associator, i.e.,
\[
(a_1,a_2,a_3)=(a_1,a_3,a_2)\text{ for all }a_1,a_2,a_3\in A.
\]
A right-symmetric algebra is {\it Novikov} if it satisfies additionally the polynomial identity
of left-commutativity
\begin{equation}\label{left-commutativity}
x_1(x_2x_3)=x_2(x_1x_3).
\end{equation}
We denote by $\mathfrak R$ and $\mathfrak N$ the varieties of all right-symmetric algebras and all Novikov algebras, respectively.
For details on the history of right-symmetric and Novikov algebras we refer to the introductions of the paper
by Dzhumadil'daev and L\"ofwall \cite{DL} and the recent preprint by Bokut, Chen, and Zhang \cite{BCZ}.
The origins of the right-symmetric algebras can be traced back till the paper by Cayley \cite{C} in 1857.
Translated in modern language, Cayley mentioned an identity which implies the right-symmetric identity
for the associators and holds for the right-symmetric Witt algebra in $d$ variables
\[
W_d^{\text{rsym}}=\left\{\sum_{i=1}^df_i\frac{\partial}{\partial x_i}\mid f_i\in K[X_d]\right\}
\]
equipped with the multiplication
\[
\left(f_i\frac{\partial}{\partial x_i}\right)\ast\left(f_j\frac{\partial}{\partial x_j}\right)
=\left(f_j\frac{\partial f_i}{\partial x_j}\right)\frac{\partial}{\partial x_i}.
\]
Cayley also considered the realization of $W_d^{\text{rsym}}$ in terms of rooted trees.
Later right-symmetric algebras were studied under different names: Vinberg, Koszul, Gerstenhaber, and pre-Lie algebras,
see the references in \cite{DL}.
The opposite algebras of Novikov algebras (satisfying the left-symmetric identity for the associators
and right commutativity) appeared in the paper by Gel'fand and Dorfman \cite{GD}. There the authors gave
an algebraic approach to the notion of Hamiltonian
operator in finite-dimensional mechanics and the formal calculus of
variations. Independently, later Novikov algebras were rediscovered by Balinskii and Novikov
in the study of equations of hydrodynamics \cite{BN}, see also the survey article by Novikov \cite{N}.
(Due to the contributions in \cite{GD, BN} and \cite{N} Bokut, Chen, and Zhang \cite{BCZ} suggest to call these algebras
Gel'fand-Dorfman-Novikov algebras. We shall continue to keep the name Novikov algebras.)
An example of a Novikov algebra is the right-symmetric Witt algebra $W_1^{\text{rsym}}$ in one variable.
In a series of papers, see, e.g., \cite{DL, D, DI} Dzhumadil'daev,
with coauthors or alone, has studied free right-symmetric and free Novikov algebras,
with applications to nonassociative algebras with polynomial identities.

In commutative invariant theory one usually considers the general linear group $GL_d(K)$ with its canonical action on the $d$-dimensional vector space
$V_d$ with basis $\{e_1,\ldots,e_d\}$. This induces an action on the polynomial algebra $K[X_d]=K[x_1,\ldots,x_d]$ in $d$ variables
\[
g(f(v))=f(g^{-1}(v)),\quad g\in GL_d(K), v\in V_d,
\]
where the linear functions $x_i:V_d\to K$ are defined by
\[
x_i(e_j)=\delta_{ij},\quad i,j=1,\ldots,d,
\]
and $\delta_{ij}$ is the Kronecker symbol. For our noncommutative considerations it is more convenient to suppress one step and,
replacing $V$ with its dual space $V^{\ast}$, to assume that $GL_d(K)$ acts canonically on the vector space
$KX_d$ with basis $X_d=\{x_1,\ldots,x_d\}$. Then, identifying the polynomial algebra $K[X_d]$ with the symmetric algebra of $KX_d$,
we extend diagonally this action of $GL_d(K)$ on $K[X_d]$:
\begin{equation}\label{diagonal action}
g(f(X_d))=g(f(x_1,\ldots,x_d))=f(g(x_1),\ldots,g(x_d)),
\end{equation}
$g\in GL_d(K)$, $f(X_d)\in K[X_d]$.
In this way $GL_d(K)$ acts as the group of linear automorphisms of $K[X_d]$.
For a subgroup $G$ of $GL_d(K)$ {\it the algebra of $G$-invariants} is
\[
K[X_d]^G=\{f\in K[X_d]\mid g(f)=f\text{ for all } g\in G\}.
\]
This is a $\mathbb Z$-graded vector space and its {\it Hilbert} (or {\it Poincar\'e}) {\it series} is the formal power series
\[
H(K[X_d]^G,z)=\sum_{n\geq 0}\dim(K[X_d]^G)_nz^n,
\]
where $(K[X_d]^G)_n$ is the homogeneous component of degree $n$ in $K[X_d]^G$.
The following are among the main problems related with the description of the algebra $K[X_d]^G$ for different groups or classes of groups $G$.
For details concerning also computational and algorithmic problems see \cite{DeK} or \cite{St}.

\begin{itemize}
\item {\it Is the algebra $K[X_d]^G$ finitely generated?} This problem was the main motivation for the Hilbert
14th problem in his famous lecture {\it ``Mathematische Probleme''} given at the
International Congress of Mathematicians held in 1900 in Paris \cite{H}.
It is known that $K[X_d]^G$ is finitely generated for finite groups (the theorem of Emmy Noether \cite{No}),
for reductive groups (the Hilbert-Nagata theorem, see e.g., \cite{DiC}), and for groups close to reductive (see e.g., Grosshans \cite{Gr}
and Had\v{z}iev \cite{Ha}).
The first example of an algebra of invariants $K[X_d]^G$ which is not finitely generated is due to Nagata \cite{Na}.

\item {\it If $K[X_d]^G$ is finitely generated, describe it in terms of generators and defining relations.} In different degree of generality
this problem is solved for classes of groups. For example, the theorem of Emmy Noether \cite{No} gives that for finite groups the algebra $K[X_d]^G$
is generated by invariants of degree $\leq \vert G\vert$. Also for finite groups, the Chevalley-Shephard-Todd theorem
\cite{Ch, ST} states that
the algebra $K[X_d]^G$ is isomorphic to the polynomial algebra in $d$ variables (i.e., it is generated by a set of $d$ algebraically independent
invariants) if and only if $G$ is generated by pseudo-reflections.

\item{\it Calculate the Hilbert series $H(K[X_d]^G,z)$.} For finite groups the answer is given by the Molien formula \cite{M}
\[
H(K[X_d]^G,z)=\frac{1}{\vert G\vert}\sum_{g\in G}\frac{1}{\det(1-gz)}.
\]
The analogue for reductive and close to them groups is the Molien-Weyl integral formula \cite{W1}, see also \cite{W2}.
\end{itemize}

In noncommutative invariant theory one replaces the polynomial algebras $K[X_d]$ with other noncommutative or nonassociative algebras
still keeping some of the typical features of polynomial algebras. One such feature is the universal property that for an arbitrary
commutative algebra $A$ every mapping $X_d\to A$ is extended to a homomorphism $K[X_d]\to A$. In the noncommutative set-up the class of
commutative algebras is replaced by an arbitrary variety of algebras $\mathfrak V$ and instead on $K[X_d]^G$ one studies the
algebra of $G$-invariants $F_d({\mathfrak V})^G$ of the $d$-generated relatively free algebra $F_d({\mathfrak V})$ in $\mathfrak V$, $d\geq 2$.
For a background see the surveys \cite{F, Dr2}.
Comparing with commutative invariant theory, when $K[X_d]^G$ is finitely generated for all ``nice'' groups,
the main difference in the noncommutative case is that $F_d({\mathfrak V})^G$ is finitely generated quite rarely.
For a survey on invariants of finite groups $G$ acting on relatively free associative algebras
see \cite{F, Dr2} and \cite{KS}. For finite groups $G\not=\langle 1\rangle$
and varieties of Lie algebras $F_d({\mathfrak V})^G$
is finitely generated if and only if $\mathfrak V$ is nilpotent, see \cite{Br, Dr1}.

Concerning the Hilbert series of $F_d({\mathfrak V})^G$, for $G$ finite there is an analogue of the Molien formula, see Formanek \cite{F}.
Let
\[
H(F_d({\mathfrak V}),z_1,\ldots,z_d)=\sum_{n_i\geq 0}\dim F_d({\mathfrak V})_{(n_1,\ldots,n_d)}z_1^{n_1}\cdots z_d^{n_d}
\]
be the Hilbert series of $F_d({\mathfrak V})$ as a multigraded vector space. It is equal to the generating function of the
dimensions of the vector spaces $F_d({\mathfrak V})_{(n_1,\ldots,n_d)}$ of the elements in $F_d({\mathfrak V})$ which are homogeneous of degree $n_i$
in $x_i$.
If $\xi_1(g),\ldots,\xi_d(g)$ are the eigenvalues of $g \in G$, then
the Hilbert series of the algebra of invariants $F_d({\mathfrak V})^G$ is
\[
H(F_d({\mathfrak V})^G;z) = \frac{1}{\vert G\vert}\sum_{g\in G}H(F_d({\mathfrak V});\xi_1(g)z,\ldots,\xi_d(g)z).
\]
There is also an analogue of the Molien-Weyl formula for the Hilbert series of $F_d({\mathfrak V})^G$ which combines ideas of
De Concini, Eisenbud, and Procesi \cite{DEP} and
Almkvist, Dicks, and Formanek \cite{ADF}. Evaluating the corresponding multiple integral one uses the Hilbert series of $F_d({\mathfrak V})$
instead of the Hilbert series of $K[X_d]$
\[
H(K[X_d],z_1,\ldots,z_d)=\prod_{i=1}^d\frac{1}{1-z_i}.
\]
We refer to \cite{BBDrGK} for other methods for computing the Hilbert series
of $F_d({\mathfrak V})^G$ when $G$ is isomorphic to the special linear group $SL_m(K)$ or to the group $UT_m(K)$ of the $m\times m$ unitriangular matrices.

In this paper we study invariant theory of relatively free right-symmetric and Novikov algebras. Let $\mathfrak L$ be the variety
of right-symmetric algebras which are left-nilpotent of class 3, i.e., $\mathfrak L$ is the subvariety of $\mathfrak R$ satisfying
the polynomial identity
\begin{equation}\label{left nilpotency}
x_1(x_2x_3)=0.
\end{equation}
For a large class of subgroups $G$ of $GL_d(K)$, $G\not=\langle 1\rangle$, $d>1$, we show that
$F_d({\mathfrak V})^G$ is not finitely generated for any variety $\mathfrak V$ containing $\mathfrak L$. More precisely, let
$A_d=K[X_d]_+$ be the algebra of polynomials without constant term and let $(A_d)_1^G=(KX_d)^G$ be the vector space of linear polynomials fixed by $G$.
Clearly, $(A_d^2)^G$ is a $K[(A_d)_1^G]$-module. If $(A_d^2)^G$ is not finitely generated as a $K[(A_d)_1^G]$-module, then
$F_d({\mathfrak V})^G$ is not finitely generated for any $\mathfrak V$ containing $\mathfrak L$. The class of such groups $G$ contains all finite groups.
It contains also the classical and close to them groups under some natural restrictions on the embedding into $GL_d(K)$.
In particular, if $(A_d)_1^G=0$ and $(A_d^2)^G\not=0$, then $F_d({\mathfrak V})^G$ is not finitely generated.
Results in the same spirit hold if we replace the polynomial algebra $K[X_d]$ with the free metabelian Lie algebra
$F_d({\mathfrak A}^2)=L_d/L_d''$, where $L_d$ is the free Lie algebra freely generated by $X_d$ and ${\mathfrak A}^2$ is the variety of all metabelian
(solvable of class 2) Lie algebras. If $(KX_d)^G=(A_d)_1^G=0$ and $\dim F_d({\mathfrak A}^2)^G=\infty$,
then again $F_d({\mathfrak V})^G$ is not finitely generated.

\section{Preliminaries}
We fix a field $K$ of characteristic 0. All vector spaces and algebras will be over $K$.
Let
\[
F(X)=K\{X\}=K\{x_1,x_2,\ldots\}
\]
be the (absolutely) free nonassociative algebra freely generated by the countable set $X=\{x_1,x_2,\ldots\}$.
Recall that the polynomial $f(x_1,\ldots,x_m)\in K\{X\}$
is a polynomial identity for the algebra $A$ if
$f(a_1,\ldots,a_m)=0$ for all $a_1,\ldots,a_m\in A$.
The class of all algebras satisfying a given set
$U\subset K\{X\}$
of polynomial identities is called the variety of associative algebras
defined by the system $U$.
If $\mathfrak V$ is a variety, then $T({\mathfrak V})$ is the ideal
of $K\{X\}$ consisting of all polynomial identities of $\mathfrak V$. Let $X_d=\{x_1,\ldots,x_d\}\subset X$. Then
the algebra
\[
F_d({\mathfrak V})=K\{x_1,\ldots,x_d\}/
(K\{x_1,\ldots,x_d\}\cap T({\mathfrak V}))=K\{X_d\}/
(K\{X_d\}\cap T({\mathfrak V}))
\]
is the relatively free algebra of rank $d$ in $\mathfrak V$.
We shall denote the generators of $F_d({\mathfrak V})$ with the same symbols $X_d$.
The ideals $K\{X_d\}\cap T({\mathfrak V})$ of
$K\{X_d\}$ are preserved by all endomorphisms $\varphi$
of $K\{X_d\}$, i.e.,
$\varphi(K\{X_d\}\cap T({\mathfrak V}))\subseteq K\{X_d\}\cap T({\mathfrak V})$.
In particular, $GL_d(K)(K\{X_d\}\cap T({\mathfrak V}))=K\{X_d\}\cap T({\mathfrak V})$.
Here the general linear group $GL_d(K)$ acts canonically
on the vector space $KX_d$ with basis $X_d$ and this action is extended diagonally on the whole $F_d({\mathfrak V})$ as in (\ref{diagonal action}).
Hence $F_d({\mathfrak V})$ has a natural structure of a $GL_d(K)$-module.
For a background on representation theory of $GL_d(K)$ see, e.g., \cite{Mc, W2}.
Since $\text{char}(K)=0$, the algebra $F_d({\mathfrak V})$ is a direct sum
of irreducible $GL_d(K)$-modules and
\[
F_d({\mathfrak V})=\sum m_{\lambda}({\mathfrak V})W_d(\lambda),
\]
where $W_d(\lambda)$ is the irreducible polynomial $GL_d(K)$-module corresponding to the partition
$\lambda=(\lambda_1,\ldots,\lambda_d)$, $\lambda_1\geq\cdots\geq\lambda_d\geq 0$, and $m_{\lambda}({\mathfrak V})$ is the multiplicity of
$W_d(\lambda)$ in the decomposition of $F_d({\mathfrak V})$. Then the Hilbert series of $F_d({\mathfrak V})$ is
\[
H(F_d({\mathfrak V}),z_1,\ldots,z_d)=\sum m_{\lambda}({\mathfrak V})S_{\lambda}(z_1,\ldots,z_d),
\]
where $S_{\lambda}(z_1,\ldots,z_d)$ is the Schur function corresponding to $\lambda$.
Since the Schur functions form a basis of the vector space $K[X_d]^{S_n}$ of symmetric polynomials in $d$ variables,
the Hilbert series $H(F_d({\mathfrak V}),z_1,\ldots,z_d)$ determines the $GL_d(K)$-module structure of $F_d({\mathfrak V})$.

In the sequel we shall need some well known information for two relatively free algebras: the polynomial algebra
$K[X_d]$ and the free metabelian Lie algebra $F_d({\mathfrak A}^2)=L_d/L_d''$.

\begin{lemma}\label{polynomial and metabelian Lie algebra}
{\rm (i)} The $GL_d(K)$-module structure of the polynomial algebra $K[X_d]$ is
\[
K[X_d]=\sum_{n\geq 0}W_d(n).
\]

{\rm (ii)} The free metabelian Lie algebra $F_d({\mathfrak A}^2)$ has a basis
\[
\{x_i,[[\cdots[x_{i_1},x_{i_2}],\ldots],x_{i_n}]\mid i,i_j=1,\ldots,d,i_1>i_2\leq \cdots\leq i_n\}.
\]
The $GL_d(K)$-module structure of $F_d({\mathfrak A}^2)$ is
\[
F_d({\mathfrak A}^2)=W_d(1)+\sum_{n\geq 2}W_d(n-1,1).
\]
\end{lemma}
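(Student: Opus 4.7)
\emph{Proof plan.}\quad For part (i), the $GL_d(K)$-action on $K[X_d]$ preserves total degree, so $K[X_d]=\bigoplus_{n\geq 0}K[X_d]_n$ decomposes into $GL_d(K)$-submodules. Since $KX_d=W_d(1)$ is the defining representation and $K[X_d]_n$ is canonically the $n$-th symmetric power $S^n(KX_d)$, the claim reduces to the standard identity $S^n(W_d(1))\cong W_d(n)$. This can be read off either from the character of $K[X_d]_n$, which is $h_n(z_1,\ldots,z_d)=S_{(n)}(z_1,\ldots,z_d)$, or from the observation that $x_1^n$ is, up to scalar, the unique highest-weight vector of weight $(n,0,\ldots,0)$.

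For part (ii), I would split the argument into spanning, linear independence, and character identification. For spanning, by the Dynkin--Specht--Wever theorem the free Lie algebra $L_d$ is spanned by left-normed commutators $[x_{i_1},\ldots,x_{i_n}]$, antisymmetry of the innermost bracket yields $i_1>i_2$, and since $[L_d',L_d']\subseteq L_d''$ the adjoint action of $L_d$ on $L_d'/L_d''$ factors through the abelianisation of $L_d$; in particular
\[
[[x_{i_1},x_{i_2}],x_{i_j},x_{i_{j+1}}]\equiv[[x_{i_1},x_{i_2}],x_{i_{j+1}},x_{i_j}]\pmod{L_d''},\quad j\geq 3.
\]
A complementary Jacobi rewrite of $[[x_{i_1},x_{i_2}],x_{i_3}]$ as $-[[x_{i_2},x_{i_3}],x_{i_1}]-[[x_{i_3},x_{i_1}],x_{i_2}]$ together with induction on $(n,i_2)$ lets one push the minimum of $\{i_2,\ldots,i_n\}$ into the second position, producing the proposed normal form.

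Linear independence is the substantive step. I would obtain it from the Magnus embedding of $L_d/L_d''$ into the semidirect product of the free abelian Lie algebra $KX_d$ with the free $K[X_d]$-module $M=\bigoplus_{i=1}^d K[X_d]m_i$ of rank $d$, sending $x_i\mapsto(x_i,m_i)$. Under this embedding
\[
[x_{i_1},x_{i_2},\ldots,x_{i_n}]\longmapsto(-1)^{n-2}x_{i_3}x_{i_4}\cdots x_{i_n}\bigl(x_{i_1}m_{i_2}-x_{i_2}m_{i_1}\bigr),
\]
and a direct coefficient comparison in $M$, using $i_1>i_2$ and $i_2\leq i_3\leq\cdots\leq i_n$, shows that the images of the candidate basis elements are linearly independent.

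For the $GL_d(K)$-module decomposition, the character of $F_d({\mathfrak A}^2)_n$ for $n\geq 2$ is
\[
\sum_{\substack{i_1>i_2\\ i_2\leq i_3\leq\cdots\leq i_n}}z_{i_1}z_{i_2}\cdots z_{i_n},
\]
which on a monomial $z_1^{a_1}\cdots z_d^{a_d}$ of total degree $n$ contributes the coefficient $\#\{l:a_l>0\}-1$, matching exactly the coefficient of the same monomial in the Jacobi--Trudi expression $S_{(n-1,1)}=h_{n-1}h_1-h_n$. Hence $F_d({\mathfrak A}^2)_n\cong W_d(n-1,1)$, and together with $F_d({\mathfrak A}^2)_1=KX_d=W_d(1)$ this yields the claimed decomposition. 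The main obstacle is the linear independence verification; the reduction to normal form and the character calculation are routine once a bona fide basis is in hand.
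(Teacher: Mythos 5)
Your argument is correct, but it is worth noting that the paper does not actually prove this lemma: it declares both parts well known and delegates part (ii) to Shmel'kin \cite{Sh} for the basis and to Drensky--Kasparian \cite{DrK} for the $GL_d(K)$-module structure. What you supply is a self-contained reconstruction, and the route you choose is the standard one underlying those references. Part (i) via $K[X_d]_n\cong S^n(W_d(1))\cong W_d(n)$ is immediate. For part (ii), the spanning step (left-normed commutators, antisymmetry in the first two slots, Jacobi to push the minimum into position two, and the fact that the adjoint action on $L_d'/L_d''$ factors through the abelianisation so that positions $3,\ldots,n$ commute) is exactly right; the Magnus embedding $x_i\mapsto(x_i,m_i)$ into $KX_d\ltimes\bigoplus_i K[X_d]m_i$ is a clean way to get linear independence, since the target is visibly metabelian so the map factors through $L_d/L_d''$ and one never needs injectivity of the embedding itself, only independence of the images. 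The one place you compress is the ``direct coefficient comparison'': for a fixed multiset the images share the same $m_{\min}$-component, so independence must be read off from the $m_{i_1}$-components for the distinct admissible values of $i_1$ --- this works, but deserves a sentence. Your character count ($\#\{l:a_l>0\}-1$ occurrences of each monomial, matched against the Jacobi--Trudi identity $S_{(n-1,1)}=h_{n-1}h_1-h_n$) is a nice elementary substitute for the representation-theoretic argument cited in \cite{DrK}. What the paper's approach buys is brevity; what yours buys is a proof readable without chasing two external sources, at the cost of about a page.
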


Part (i) of the lemma is well known. Part (ii) is also well known, see e.g.,
\cite[\S 52, pp. 274-276 of the English translation]{Sh} for the basis of $F_d({\mathfrak A}^2)$
and \cite[the proof of Lemma 2.5]{DrK} for its $GL_d(K)$-module structure.

The product of two Schur functions $S_{\lambda}(z_1,\ldots,z_d)S_{\mu}(z_1,\ldots,z_d)$
can be expressed as a sum of Schur functions using the Littlewood-Richardson rule. A very special case of this rule
is the Branching theorem, when $\mu=(1)$. It states that
\begin{equation}\label{Branching theorem}
S_{\lambda}(z_1,\ldots,z_d)S_{(1)}(z_1,\ldots,z_d)=\sum S_{\nu}(z_1,\ldots,z_d),
\end{equation}
where the sum runs on all partitions $\nu=(\nu_1,\ldots,\nu_d)$ obtained by adding 1
to one of the components $\lambda_i$ of $\lambda=(\lambda_1,\ldots,\lambda_d)$. In other words, the Young diagram of $\nu$ is obtained
by adding a box to the diagram of $\lambda$. Since the product of two Schur functions corresponds to the tensor product of the corresponding
irreducible $GL_d(K)$-modules, we obtain equivalently
\begin{equation}\label{Branching theorem for modules}
W_d(\lambda)\otimes_K W_d(1)=\sum W_d(\nu),
\end{equation}
with the same summation on $\nu$ as in (\ref{Branching theorem}).

If $G$ is a subgroup of $GL_d(K)$, then the $GL_d(K)$-action on the irreducible $GL_d(K)$-module $W_d(\lambda)$ induces a  $G$-action on $W_d(\lambda)$.
Let $W_d(\lambda)^G$ be the vector space of the elements of $W_d(\lambda)$ fixed by $G$, i.e., of the $G$-invariants of $W_d(\lambda)$.
If $W$ is a graded $GL_d(K)$-module with polynomial homogeneous components,
\begin{equation}\label{graded GL-module}
W=\bigoplus_{k\geq 0}W_k,\quad W_k=\sum_{\lambda} m_{\lambda}(k)W_d(\lambda),
\end{equation}
then its Hilbert series is
\begin{equation}\label{Hilbert series of graded GL-module}
\begin{split}
H(W,z_1,\ldots,z_d,z)&=\sum_{k\geq 0}\left(\sum_{n_i\geq 0}\dim(W_k)_{(n_1,\ldots,n_d)}z_1^{n_1}\cdots z_d^{n_d}\right)z^k\\
&=\sum_{k\geq 0}\sum_{\lambda} m_{\lambda}(k)S_{\lambda}(z_1,\ldots,z_d)z^k.\\
\end{split}
\end{equation}

\begin{lemma}\label{Hilbert series of F determines this of G-invariants}
Let $W$ be a graded $GL_d(K)$-module with polynomial homogeneous components, as in (\ref{graded GL-module}), and let $G$ be a subgroup of
$GL_d(K)$. Then the Hilbert series of the $G$-invariants of $W$
\[
H(W^G,z)=\sum_{k\geq 0}\dim W_k^Gz^k
\]
is determined from the Hilbert series (\ref{Hilbert series of graded GL-module}) of $W$.
\end{lemma}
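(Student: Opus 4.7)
The plan is to reduce the statement to the observation that a Hilbert series written in the Schur basis uniquely recovers the multiplicities $m_\lambda(k)$, after which invariance is taken irreducible by irreducible.

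First, I would argue that the multiplicities $m_\lambda(k)$ are reconstructible from $H(W,z_1,\ldots,z_d,z)$. Writing this series as in (\ref{Hilbert series of graded GL-module}), each coefficient of $z^k$ is a symmetric polynomial in $z_1,\ldots,z_d$ (since $W_k$ is a polynomial $GL_d(K)$-module). By the well-known fact that the Schur polynomials $\{S_\lambda(z_1,\ldots,z_d)\}_\lambda$ are a $K$-basis of the ring $K[z_1,\ldots,z_d]^{S_d}$ of symmetric polynomials, the expansion
\[
[z^k]H(W,z_1,\ldots,z_d,z)=\sum_\lambda m_\lambda(k)S_\lambda(z_1,\ldots,z_d)
\]
is unique, so the data $\{m_\lambda(k)\}_{\lambda,k}$ is extracted from $H(W,z_1,\ldots,z_d,z)$.

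Next, since $\text{char}(K)=0$ and $W_k$ decomposes as a direct sum of $GL_d(K)$-modules $W_k\cong\bigoplus_\lambda W_d(\lambda)^{\oplus m_\lambda(k)}$, taking $G$-fixed points (which commutes with finite direct sums) yields
\[
W_k^G\cong\bigoplus_\lambda \bigl(W_d(\lambda)^G\bigr)^{\oplus m_\lambda(k)},
\qquad
\dim W_k^G=\sum_\lambda m_\lambda(k)\dim W_d(\lambda)^G.
\]
The numbers $\dim W_d(\lambda)^G$ depend only on the fixed pair $(W_d(\lambda),G)$ and not on $W$, hence are intrinsic constants attached to $G$ and $\lambda$.

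Combining these two steps,
\[
H(W^G,z)=\sum_{k\geq 0}\left(\sum_\lambda m_\lambda(k)\dim W_d(\lambda)^G\right)z^k,
\]
which expresses $H(W^G,z)$ as a function of the multiplicities $m_\lambda(k)$ read off from $H(W,z_1,\ldots,z_d,z)$ in step one. No step here is truly difficult: the only point worth being careful about is that the polynomial homogeneous components hypothesis is genuinely needed to guarantee that each $[z^k]H(W,\ldots)$ lies in the span of Schur polynomials indexed by partitions (so that the Schur-basis argument applies). That hypothesis is part of the statement, so there is no real obstacle.
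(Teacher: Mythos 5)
Your proposal is correct and follows essentially the same route as the paper: recover the multiplicities $m_\lambda(k)$ from the Schur-function expansion of the Hilbert series (using that the $S_\lambda$ are linearly independent in the ring of symmetric polynomials), note that $\dim W_d(\lambda)^G$ depends only on $\lambda$ and $G$, and assemble $H(W^G,z)=\sum_k\bigl(\sum_\lambda m_\lambda(k)\dim W_d(\lambda)^G\bigr)z^k$. Your write-up is in fact slightly more explicit than the paper's about why the multiplicities are uniquely determined, but there is no substantive difference.
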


\begin{proof}
We follow the main ideas of the recent preprint \cite{DoDr} which contains more applications in the spirit of the lemma.
Since the dimension of $W_d(\lambda)^G$ depends on $W_d(\lambda)$ only, and the Schur functions $S_{\lambda}(z_1,\ldots,z_d)$
are in 1-1 correspondence with the modules $W_d(\lambda)$, we conclude that $\dim W_d(\lambda)^G$ is a function of $S_{\lambda}(z_1,\ldots,z_d)$.
This immediately completes the proof because
\[
H(W^G,z)=\sum_{k\geq 0}\left(\sum_{\lambda}m_{\lambda}(k)\dim W_d(\lambda)^G\right)z^k.
\]
\end{proof}

The proof of the next statement can be found in \cite[Proposition 4.2]{DrG} in the case of homomorphic images of the free associative algebra
$K\langle X_d\rangle$. The proof in the case below is exactly the same.

\begin{proposition}\label{lifting invariants}
Let $I$ be an ideal of the relatively free algebra $F_d({\mathfrak W})$ of the variety $\mathfrak W$
and let $I$ be preserved under the $GL_d(K)$-action on $F_d({\mathfrak W})$. If $G$ is a subgroup of $GL_d(K)$, then every $G$-invariant
of the factor algebra $F_d({\mathfrak W})/I$ can be lifted to a $G$-invariant of $F_d({\mathfrak W})$, i.e., under the canonical homomorphism
\[
\pi:F_d({\mathfrak W})\to F_d({\mathfrak W})/I
\]
$F_d({\mathfrak W})^G$ maps onto $(F_d({\mathfrak W})/I)^G$. In particular, if
$\mathfrak V$ is a subvariety of the variety $\mathfrak W$ and $\pi:F_d({\mathfrak W})\to F_d({\mathfrak V})$, then
\[
\pi(F_d({\mathfrak W})^G)=F_d({\mathfrak V})^G.
\]
\end{proposition}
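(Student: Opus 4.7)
The plan is to exploit complete reducibility of the $GL_d(K)$-action on $F_d({\mathfrak W})$. Since $\text{char}(K)=0$ and $F_d({\mathfrak W})$ is a locally finite polynomial $GL_d(K)$-module (its multihomogeneous components are finite-dimensional, the action being polynomial), it decomposes as a direct sum of irreducible polynomial modules $W_d(\lambda)$ in complete analogy with the decomposition recalled before Lemma \ref{polynomial and metabelian Lie algebra}. The ideal $I$ is by hypothesis $GL_d(K)$-stable, so by complete reducibility it admits a $GL_d(K)$-invariant complement, yielding a direct sum decomposition
\[
F_d({\mathfrak W}) = I \oplus C
\]
in the category of $GL_d(K)$-modules. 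The restriction of the canonical projection to $C$ is then a $GL_d(K)$-module isomorphism $\pi|_C \colon C \to F_d({\mathfrak W})/I$.

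Next I would pass to $G$-invariants. Taking $G$-fixed points is an additive functor, so it respects the above direct sum: $F_d({\mathfrak W})^G = I^G \oplus C^G$, and the isomorphism $\pi|_C$ restricts to an isomorphism $C^G \to (F_d({\mathfrak W})/I)^G$. In particular, every $G$-invariant of the quotient is the image under $\pi$ of a (unique) element of $C^G \subseteq F_d({\mathfrak W})^G$, which proves $\pi(F_d({\mathfrak W})^G) = (F_d({\mathfrak W})/I)^G$. The particular case where $\mathfrak V \subseteq \mathfrak W$ and $I = \ker(F_d({\mathfrak W}) \to F_d({\mathfrak V}))$ is immediate from the general case: $I$ is a T-ideal, hence stable under all endomorphisms of $F_d({\mathfrak W})$ and in particular under the $GL_d(K)$-action, which is exactly the hypothesis.

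There is no real obstacle in the argument: the entire content sits in the complete reducibility step, which relies only on $\text{char}(K)=0$ and the polynomiality of the $GL_d(K)$-representation on $F_d({\mathfrak W})$. Associativity of the ambient algebra plays no role, so the proof of the associative analogue in \cite[Proposition 4.2]{DrG} transfers verbatim, as the proposition itself remarks.
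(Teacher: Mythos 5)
Your argument is correct and is essentially the argument the paper points to: the paper gives no proof of its own but cites \cite[Proposition 4.2]{DrG}, whose proof is exactly this complete-reducibility argument (a $GL_d(K)$-stable complement $C$ of $I$, with $\pi|_C$ a $G$-module isomorphism onto the quotient, hence surjective on $G$-fixed points). Your verification that the kernel of $F_d({\mathfrak W})\to F_d({\mathfrak V})$ is a fully invariant, hence $GL_d(K)$-stable, ideal correctly handles the special case.
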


For more details on varieties of algebras (in the associative case) and the applications of representation theory of $GL_d(K)$
to PI-algebras we refer to the book \cite{Dr3}.

\section{The main result}

Let $\mathfrak L$ be the subvariety of the variety of right-symmetric algebras $\mathfrak R$ defined by the identity (\ref{left nilpotency})
of left-nilpotency of class 3. Since the identity (\ref{left-commutativity}) of left-commutativity is a consequence of (\ref{left nilpotency}),
$\mathfrak L$ is also a subvariety of the variety $\mathfrak N$ of Novikov algebras. Working in $\mathfrak L$, the only nonzero products
are left-normed. We shall omit the parentheses and shall write $a_1a_2\cdots a_n$ instead of $(\cdots(a_1a_2)\cdots )a_n$
and $a_1a_2^k$ instead of $a_1\underbrace{a_2\cdots a_2}_{k\text{ times}}$.

\begin{lemma}\label{description of free algebras in L}
{\rm (i)} The relatively free algebra $F_d({\mathfrak L})$ has a basis
\begin{equation}\label{basis of free algebras in L}
\{x_{i_1}x_{i_2}\cdots x_{i_n}\mid i_1=1,\ldots,d,\quad 1\leq i_2\leq\cdots\leq i_n\leq d\}.
\end{equation}
{\rm (ii)} The $GL_d(K)$-module structure of $F_d({\mathfrak L})$ is
\begin{equation}\label{GL-module structure of free algebras in L}
F_d({\mathfrak L})=W_d(1)+\sum_{n\geq 2}(W_d(n)+W_d(n-1,1)).
\end{equation}
\end{lemma}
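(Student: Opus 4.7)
The plan is to prove (i) by first establishing that (\ref{basis of free algebras in L}) spans $F_d({\mathfrak L})$ using the defining identities of $\mathfrak L$, then proving linear independence by exhibiting an explicit model algebra in $\mathfrak L$; part (ii) will then follow from (i) by identifying each homogeneous component of $F_d({\mathfrak L})$ with a tensor product of known $GL_d(K)$-modules and applying the Branching theorem.

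For spanning, left-nilpotency $x_1(x_2x_3)=0$ kills any monomial whose associated binary tree has a right subtree with more than one leaf, so inductively every monomial in $F_d({\mathfrak L})$ reduces to a left-normed product $x_{i_1}x_{i_2}\cdots x_{i_n}$. A direct computation combining (\ref{right-symmetric identity}) and (\ref{left nilpotency}) gives
\[
(x_1x_2)x_3-(x_1x_3)x_2=\bigl(x_1(x_2x_3)-x_1(x_3x_2)\bigr)+\bigl((x_1,x_3,x_2)-(x_1,x_2,x_3)\bigr)=0,
\]
so any adjacent transposition of the last $n-1$ indices of a left-normed product leaves the product unchanged. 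Iterating, one may sort $i_2,\ldots,i_n$ into non-decreasing order, proving that (\ref{basis of free algebras in L}) spans $F_d({\mathfrak L})$.

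For linear independence, let $V$ be the vector space with basis indexed by the tuples in (\ref{basis of free algebras in L}); denote a basis element by $b(i_1;i_2,\ldots,i_n)$. Define a bilinear product on $V$ by declaring $u\cdot v=0$ whenever $v$ has degree $\geq 2$, and otherwise
\[
b(i_1;i_2,\ldots,i_n)\cdot b(j)=b(i_1;\,\mathrm{sort}(i_2,\ldots,i_n,j)).
\]
A short case analysis according to whether each of $v,w$ has degree one shows that $V$ satisfies (\ref{right-symmetric identity}) and (\ref{left nilpotency}), so $V\in\mathfrak L$. The universal homomorphism $F_d({\mathfrak L})\to V$ sending $x_i\mapsto b(i)$ maps each monomial of (\ref{basis of free algebras in L}) to a distinct basis element of $V$, which forces linear independence in $F_d({\mathfrak L})$ and completes the proof of (i).

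For (ii), the basis from (i) yields, for $n\geq 2$, a well-defined linear map
\[
\phi_n:F_d({\mathfrak L})_n\to KX_d\otimes_K K[X_d]_{n-1},\qquad x_{i_1}x_{i_2}\cdots x_{i_n}\mapsto x_{i_1}\otimes (x_{i_2}\cdots x_{i_n}),
\]
which is a $GL_d(K)$-equivariant isomorphism because the $GL_d(K)$-action is multilinear in both the left-normed product and the commutative product. By Lemma \ref{polynomial and metabelian Lie algebra}(i) we have $K[X_d]_{n-1}\cong W_d(n-1)$ and trivially $KX_d\cong W_d(1)$, so the Branching theorem (\ref{Branching theorem for modules}) applied to $\lambda=(n-1)$ gives
\[
F_d({\mathfrak L})_n\cong W_d(1)\otimes W_d(n-1)=W_d(n)+W_d(n-1,1),
\]
which combined with $F_d({\mathfrak L})_1=W_d(1)$ yields (\ref{GL-module structure of free algebras in L}). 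The main technical step is the verification that the multiplication on $V$ satisfies the right-symmetric identity; once this case analysis is in place, the remainder of the argument is essentially formal.
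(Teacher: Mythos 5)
Your argument is correct. For part (i) you follow essentially the same route as the paper: spanning is obtained by observing that left-nilpotency forces all products to be left-normed and that the right-symmetric identity degenerates modulo $x_1(x_2x_3)=0$ to $(x_1x_2)x_3=(x_1x_3)x_2$, so the tail of a left-normed monomial can be sorted; linear independence is obtained from a model algebra whose multiplication appends a degree-one factor to the sorted tail and annihilates any right factor of degree $\geq 2$ --- your $V$ is literally the paper's algebra $A$, and your remark that the verification of the identities reduces to a case analysis on the degrees of the factors is the content of the paper's ``obviously.'' For part (ii), however, you take a genuinely different and arguably cleaner route. The paper splits the degree-$n$ basis into monomials with $i_1\leq i_2$ and monomials with $i_1>i_2$, matches these two families with bases of $K[X_d]_n$ and of the degree-$n$ part of the commutator ideal of the free metabelian Lie algebra $F_d({\mathfrak A}^2)$, and then invokes the fact that the Hilbert series determines the $GL_d(K)$-module structure together with the known decompositions of $K[X_d]$ and $F_d({\mathfrak A}^2)$ from Lemma \ref{polynomial and metabelian Lie algebra}. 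You instead identify the whole degree-$n$ component at once as $W_d(1)\otimes_K W_d(n-1)$ via the equivariant isomorphism $x_{i_1}\otimes x_{i_2}\cdots x_{i_n}\mapsto x_{i_1}x_{i_2}\cdots x_{i_n}$ (well defined and bijective precisely because of the basis from part (i)) and apply the Branching theorem. Your version needs only Lemma \ref{polynomial and metabelian Lie algebra}(i) and (\ref{Branching theorem for modules}) and avoids the metabelian Lie algebra entirely; the paper's version has the side benefit of making explicit the correspondence between $\sum_{n\geq 2}W_d(n-1,1)$ and the commutator ideal of $F_d({\mathfrak A}^2)$, which is exactly the observation reused later in the proof of Corollary \ref{metabelian Lie algebras}. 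Either derivation of (\ref{GL-module structure of free algebras in L}) is complete.
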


\begin{proof}
(i) Modulo the identity (\ref{left nilpotency}) the right-symmetric identity (\ref{right-symmetric identity}) reduces to
\begin{equation}\label{RS mod LN}
x_1x_2x_3=x_1x_3x_2.
\end{equation}
Hence $\mathfrak L$ satisfies the identity
\[
x_1x_2\cdots x_n=x_1x_{\sigma(2)}\cdots x_{\sigma(n)},\quad \sigma\in S_n,\sigma(1)=1,
\]
and the algebra $F_d({\mathfrak L})$ is spanned as a vector space on the elements (\ref{basis of free algebras in L}).
In order to show that (\ref{basis of free algebras in L}) is a basis of $F_d({\mathfrak L})$ it is sufficient to construct
an algebra $A$ in $\mathfrak L$ which is generated by $a_1,\ldots,a_d$ and has a basis
\begin{equation}\label{the algebra A in L}
\{a_ia_1^{n_1}\cdots a_d^{n_d}\mid i_1=1,\ldots,d, \quad n_j\geq 0\}.
\end{equation}
Since $A$ is a homomorphic image of $F_d({\mathfrak L})$, this would imply
that (\ref{basis of free algebras in L}) is a basis of $F_d({\mathfrak L})$. Consider the vector space $A$
with basis (\ref{the algebra A in L}) and define a multiplication there by the rule
\[
(a_ia_1^{n_1}\cdots a_d^{n_d})\ast a_j=a_ia_1^{n_1}\cdots a_j^{n_j+1}\cdots a_d^{n_d},
\]
\[
(a_ia_1^{n_1}\cdots a_d^{n_d})*(a_ia_1^{m_1}\cdots a_d^{m_d})=0,\text{ if }m_j>0\text{ for some }j.
\]
Obviously $A$ satisfies the identities (\ref{left nilpotency}) and (\ref{RS mod LN}), and hence belongs to $\mathfrak L$.

(ii) For $n\geq 2$ we divide the basis elements from (\ref{basis of free algebras in L}) in two groups.
The first group contains the monomials $x_{i_1}x_{i_2}\cdots x_{i_n}$ with $i_1\leq i_2$ and the second group the monomials
with $i_1>i_2$. Obviously, the monomials in the first group are in 1-1 correspondence with the monomials of degree $\geq 2$ in $K[X_d]$.
By Lemma \ref{polynomial and metabelian Lie algebra} (ii), the same holds for the monomials from the second group and the elements
of degree $\geq 2$ in $F_d({\mathfrak A}^2)$. Hence the Hilbert series of $F_d({\mathfrak L})$ is a sum of the Hilbert series
of the algebra of polynomials without constant term and the commutator ideal of the Lie algebra $F_d({\mathfrak A}^2)$.
Now the proof follows from Lemma \ref{polynomial and metabelian Lie algebra}.
\end{proof}

The construction in the proof of Lemma \ref{description of free algebras in L} (i) suggests that the algebra $F_d({\mathfrak L})$
has the structure of a right $K[X_d]$-module with action defined by
\[
(x_px_1^{n_1}\cdots x_d{n_d})\circ(x_1^{m_1}\cdots x_d^{m_d})=x_px_1^{n_1+m_1}\cdots x_d^{n_d+m_d},\quad p=1,\ldots,d,n_j,m_j\geq 0.
\]
Clearly, the ideal $F_d^2({\mathfrak L})$ of the elements in $F_d({\mathfrak L})$ without linear term is a $K[X_d]$-submodule.
We shall denote by $(A_d)_1$ the vector space $KX_d$ and shall identify $K[X_d]$ and $K[(A_d)_1]$.

The following theorem and its consequences together with the examples in the next section are the main results of the paper.

\begin{theorem}\label{main theorem}
Let $\mathfrak V$ be a subvariety of the variety $\mathfrak R$ of all right-symmetric algebras and let $\mathfrak V$ contain the variety
$\mathfrak L$ of left-nilpotent of class $3$ algebras in $\mathfrak R$. If $G\not=\langle 1\rangle$ is a subgroup of $GL_d(K)$ such that the ideal
$F_d^2({\mathfrak L})^G$ of the algebra of invariants $F_d({\mathfrak L})^G$ is not finitely generated as a $K[(A_d)_1^G]$-module, then
the algebra of $G$-invariants $F_d({\mathfrak V})^G$ is not finitely generated.
\end{theorem}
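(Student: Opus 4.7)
The plan is to reduce first to the case $\mathfrak V = \mathfrak L$ using Proposition \ref{lifting invariants}, and then to exploit the severe multiplicative constraints imposed by left-nilpotency in order to convert algebra generation of $F_d(\mathfrak L)^G$ into $K[(A_d)_1^G]$-module generation of $F_d^2(\mathfrak L)^G$, at which point the hypothesis immediately yields a contradiction.

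For the reduction, since $\mathfrak L \subseteq \mathfrak V$, the canonical projection $\pi : F_d(\mathfrak V) \to F_d(\mathfrak L)$ is $GL_d(K)$-equivariant with $GL_d(K)$-invariant kernel. Proposition \ref{lifting invariants} gives $\pi(F_d(\mathfrak V)^G) = F_d(\mathfrak L)^G$, and the image of a finitely generated algebra under a surjective homomorphism is finitely generated; hence it suffices to prove that $F_d(\mathfrak L)^G$ is not finitely generated as an algebra. The key structural fact is that the left-nilpotency identity (\ref{left nilpotency}) gives $a \cdot b = 0$ in $F_d(\mathfrak L)$ whenever $b \in F_d^2(\mathfrak L)$. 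By a routine induction on the bracketing, this implies that any nonzero product of elements of $F_d(\mathfrak L)$ must be a left-normed monomial $a_1 a_2 \cdots a_k$ in which $a_2, \ldots, a_k$ all lie in $(A_d)_1$, while only the leftmost factor $a_1$ may lie in $F_d^2(\mathfrak L)$; any non-left-normed bracketing, or any occurrence of a factor of degree $\geq 2$ to the right of a multiplication, vanishes.

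Suppose for contradiction that $F_d(\mathfrak L)^G$ is generated as an algebra by finitely many homogeneous invariants. Since the only linear elements producible from a set of homogeneous generators are $K$-linear combinations of the linear ones, we may rechoose the generating set so that $y_1, \ldots, y_s$ form a $K$-basis of $(A_d)_1^G$ and $y_{s+1}, \ldots, y_r$ lie in $F_d^2(\mathfrak L)^G$; then $K[y_1, \ldots, y_s] = K[(A_d)_1^G]$. By the structural observation, every element of $F_d^2(\mathfrak L)^G$ is a $K$-linear combination of left-normed products $y_{j_1} y_{j_2} \cdots y_{j_k}$ with $k \geq 2$, $j_1 \in \{1, \ldots, r\}$, and $j_2, \ldots, j_k \in \{1, \ldots, s\}$. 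Using the right $K[(A_d)_1^G]$-module action on $F_d^2(\mathfrak L)$, such a monomial equals $(y_{j_1} y_{j_2}) \circ (y_{j_3} \cdots y_{j_k})$ if $j_1 \leq s$, and $y_{j_1} \circ (y_{j_2} \cdots y_{j_k})$ if $j_1 > s$, where the parenthesized right factor is a polynomial in $K[(A_d)_1^G]$. Consequently the finite set $\{y_i y_j : 1 \leq i, j \leq s\} \cup \{y_{s+1}, \ldots, y_r\}$ generates $F_d^2(\mathfrak L)^G$ as a $K[(A_d)_1^G]$-module, contradicting the hypothesis. The main obstacle is the structural observation of the second paragraph — once the equivalence between algebra generation of $F_d(\mathfrak L)^G$ and module generation of $F_d^2(\mathfrak L)^G$ is pinned down, the theorem follows immediately.
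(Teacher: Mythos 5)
Your proposal is correct and follows essentially the same route as the paper: reduce to $F_d(\mathfrak L)$ via Proposition \ref{lifting invariants}, observe that left-nilpotency forces all nonzero products to be left-normed with only linear factors after the first, and conclude that a finite algebra generating set of $F_d(\mathfrak L)^G$ would yield a finite $K[(A_d)_1^G]$-module generating set of $F_d^2(\mathfrak L)^G$ (the degree-two products of linear generators together with the higher-degree generators), contradicting the hypothesis. This matches the paper's argument step for step.
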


\begin{proof}
By Proposition \ref{lifting invariants} the canonical homomorphism $F_d({\mathfrak V})\to F_d({\mathfrak L})$ maps
$F_d({\mathfrak V})^G$ onto $F_d({\mathfrak L})^G$ and if $F_d({\mathfrak V})^G$ is finitely generated, the same is $F_d({\mathfrak L})^G$.
Hence it is sufficient to show that $F_d({\mathfrak L})^G$ is not finitely generated. Therefore we may work in $F_d({\mathfrak L})$ and assume that
$F_d({\mathfrak L})^G$ is finitely generated.
As a vector space $F_d({\mathfrak L})^G$ is a direct sum of the invariants of first degree $(KX_d)^G=(A_d)_1^G$ and the invariants $F_d^2({\mathfrak L})^G$
without linear term. We may assume that $F_d({\mathfrak L})^G$ is generated by $U=\{u_1,\ldots,u_k\}\subset (A_d)_1^G$ and
$W=\{w_1,\ldots,w_l\}\subset F_d^2({\mathfrak L})^G$.
Since $F_d({\mathfrak L})^GF_d^2({\mathfrak L})^G=0$, the only nonzero products of the generators of $F_d({\mathfrak L})^G$ are
$u_pu_{i_1}\cdots u_{u_m}$, and $w_qu_{i_1}\cdots u_{u_m}$, $m\geq 0$. Hence $KU=(A_d)_1^G$,
\[
F_d^2({\mathfrak L})^G=\sum_{i=1}^ku_{p_1}u_{p_2}\circ K[(A_d)_1^G]+\sum_{j=1}^lw_q\circ K[(A_d)_1^G]
\]
and $F_d^2({\mathfrak L})^G$ is a finitely generated $K[(A_d)_1^G]$-module which is a contradiction.
\end{proof}

\begin{corollary}\label{reduction to polynomials}
Let $A_d=K[X_d]_+$ be the algebra of polynomials without constant term
and let $G$ be a subgroup of $GL_d(K)$. If $(A_d^2)^G$ is not finitely generated as a $K[(A_d)_1^G]$-module, then
$F_d({\mathfrak V})^G$ is not finitely generated for any variety $\mathfrak V$ containing $\mathfrak L$.
\end{corollary}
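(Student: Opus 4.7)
The strategy is to invoke Theorem~\ref{main theorem} after establishing the following implication: if $(A_d^2)^G$ is not finitely generated as a $K[(A_d)_1^G]$-module, then neither is $F_d^2(\mathfrak L)^G$. The plan is to produce a surjective $K[(A_d)_1^G]$-module homomorphism
\[
\mu\colon F_d^2(\mathfrak L)^G \twoheadrightarrow (A_d^2)^G,
\]
so that finite generation of the source forces finite generation of the target; the corollary then follows from Theorem~\ref{main theorem} by contrapositive.

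The key input is the observation, recorded just before Theorem~\ref{main theorem}, that the $\circ$-action makes $F_d(\mathfrak L)$ into a free right $K[X_d]$-module of rank $d$ on the generators $x_1,\ldots,x_d$. I would record this more invariantly as a $GL_d(K)$-equivariant isomorphism $F_d(\mathfrak L)\cong KX_d\otimes_K K[X_d]$, where $GL_d(K)$ acts diagonally on the tensor product; restricting to positive degrees yields $F_d^2(\mathfrak L)\cong KX_d\otimes_K A_d$. I would then define
\[
\mu\colon F_d^2(\mathfrak L)\longrightarrow A_d^2,\qquad \mu(x_p\circ f)=x_p f,
\]
using ordinary commutative multiplication on the right. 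A direct check shows $\mu$ is surjective, $GL_d(K)$-equivariant, and $K[X_d]$-linear for the $\circ$-module structure on the source and polynomial multiplication on the target:
\[
\mu\bigl((x_p\circ f)\circ h\bigr)=\mu\bigl(x_p\circ(fh)\bigr)=x_pfh=\mu(x_p\circ f)\cdot h.
\]

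The multiplicity-free decomposition in Lemma~\ref{description of free algebras in L}(ii) combined with Lemma~\ref{polynomial and metabelian Lie algebra}(i) and the Branching identity~(\ref{Branching theorem for modules}) identifies $A_d^2$ with the isotypic component $\sum_{n\ge 2}W_d(n)$ of $F_d^2(\mathfrak L)$ and $\ker\mu$ with $\sum_{n\ge 2}W_d(n-1,1)$. Since these two isotypic parts share no common $GL_d(K)$-irreducible and $\mathrm{char}(K)=0$, there is a $GL_d(K)$-equivariant section $\sigma\colon A_d^2\to F_d^2(\mathfrak L)$ with $\mu\circ\sigma=\mathrm{id}$. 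As $G\subseteq GL_d(K)$, the section $\sigma$ restricts to $(A_d^2)^G\hookrightarrow F_d^2(\mathfrak L)^G$, so $\mu$ restricts to the desired surjective $K[(A_d)_1^G]$-module homomorphism, completing the proof modulo Theorem~\ref{main theorem}.

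The only delicate point I anticipate is the identification of $\ker\mu$ as a concrete $GL_d(K)$-submodule (rather than merely an abstract summand of the correct isomorphism type), and the consequent existence of a $GL_d(K)$-equivariant splitting on the nose; both follow automatically from the rigidity of isotypic components in the semisimple category of polynomial $GL_d(K)$-representations, once the multiplicities from Lemma~\ref{description of free algebras in L}(ii) and the Branching rule are in hand.
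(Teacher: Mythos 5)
Your argument is correct and proves the same reduction as the paper (non--finite generation of $(A_d^2)^G$ over $K[(A_d)_1^G]$ forces the same for $F_d^2({\mathfrak L})^G$, after which Theorem~\ref{main theorem} applies), but by a genuinely different mechanism. The paper shows that $I=\sum_{n\ge 2}W_d(n-1,1)$ is a two-sided \emph{ideal} of $F_d({\mathfrak L})$ --- using the Branching rule (\ref{Branching theorem for modules}) to see that $W_d(n-1,1)\otimes_K W_d(1)$ has no constituent occurring in $F_d({\mathfrak L})$, so the product vanishes --- then identifies the quotient algebra $F_d({\mathfrak L})/I$ with $A_d$ through Hilbert series and Lemma~\ref{Hilbert series of F determines this of G-invariants}, and finally lifts invariants through the algebra surjection via Proposition~\ref{lifting invariants}. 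You instead exhibit the explicit $K[X_d]$-linear, $GL_d(K)$-equivariant surjection $\mu(x_p\circ f)=x_pf$ and split it equivariantly; this bypasses the verification that the kernel is an algebra ideal, the Hilbert-series comparison, and Proposition~\ref{lifting invariants} altogether, at the cost of giving up the (reusable) stronger statement that $F_d({\mathfrak L})/I\cong A_d$ as algebras. Note that the ``delicate point'' you flag is not actually needed: surjectivity of $\mu$ on $G$-invariants already follows from complete reducibility of the finite-dimensional polynomial $GL_d(K)$-modules in each degree (any equivariant surjection splits, and a $GL_d(K)$-equivariant section is in particular $G$-equivariant), without identifying $\ker\mu$ as a concrete isotypic component; multiplicity-freeness only makes the section unique. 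The one verification worth writing out is the equivariance $g(u\circ f)=g(u)\circ g(f)$ for $g\in GL_d(K)$, which is what makes $F_d^2({\mathfrak L})^G$ a $K[(A_d)_1^G]$-module at all and makes $\mu$ a module map on invariants, together with the remark that this $\circ$-module structure on $F_d^2({\mathfrak L})^G$ coincides with the one used in the proof of Theorem~\ref{main theorem} (iterated right multiplication by linear invariants), so that your reduction plugs into that theorem correctly.
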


\begin{proof}
By the Branching theorem (\ref{Branching theorem for modules})
\begin{equation}\label{products of GL-modules in F}
W_d(n-1,1)\otimes_K W_d(1)=W_d(n,1)\oplus W_d(n-1,2)\oplus W_d(n-1,1,1).
\end{equation}
Consider the $GL_d(K)$-module decomposition of $F_d({\mathfrak L})$ given in
Lemma \ref{description of free algebras in L} (ii). Since $F_d({\mathfrak L})F_d^2({\mathfrak L})=0$, the only nonzero products
$W_d(\lambda)W_d(\mu)$ with $\lambda$ or $\mu$ equal to $(n-1,1)$, $n\geq 2$, come from
\[
W_d(n-1,1)W_d(1)=W_d(n-1,1)F_d({\mathfrak L})=W_d(n-1,1)(KX_d)=W_d(n-1,1)(A_d)_1.
\]
This is a homomorphic image in $F_d({\mathfrak L})$ of $W_d(n-1,1)\otimes_K W_d(1)$. By (\ref{products of GL-modules in F}) we derive that
$W_d(n-1,1)F_d({\mathfrak L})\subset W_d(n,1)$. This implies that
\[
I=\sum_{n\geq 2}W_d(n-1,1)\subset F_d({\mathfrak L})
\]
is an ideal of $F_d({\mathfrak L})$ and the $GL_d(K)$-module structure of the factor algebra is
\[
F_d({\mathfrak L})/I=\sum_{n\geq 1}W_d(n)\cong A_d.
\]
Hence the algebras $F_d({\mathfrak L})/I$ and $A_d$ have the same Hilbert series and
by Lemma \ref{Hilbert series of F determines this of G-invariants} the same holds for their algebras of invariants.
Since $(A_d^2)^G$ is not finitely generated as a $K[(A_d)_1^G]$-module, the same is true for the $K[(A_d)_1^G]$-module
$F_d^2({\mathfrak L})/I$. By Proposition \ref{lifting invariants}, the $K[(A_d)_1^G]$-module $F_d^2({\mathfrak L})$
is not finitely generated and the application of Theorem \ref{main theorem} completes the proof.
\end{proof}

\begin{corollary}\label{metabelian Lie algebras}
Let $F_d({\mathfrak A}^2)$ be the free metabelian Lie algebra and let $G$ be a subgroup of $GL_d(K)$. If
$(KX_d)^G=(A_d)_1^G=0$ and $\dim F_d({\mathfrak A}^2)^G=\infty$,
then $F_d({\mathfrak V})^G$ is not finitely generated for any variety $\mathfrak V$ containing $\mathfrak L$.
\end{corollary}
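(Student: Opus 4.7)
The plan is to deduce the corollary directly from Theorem~\ref{main theorem}. Under the hypothesis $(A_d)_1^G=0$ we have $K[(A_d)_1^G]=K$, so ``finitely generated as a $K[(A_d)_1^G]$-module'' simply means finite-dimensional. Thus it suffices to show that $F_d^2(\mathfrak L)^G$ is infinite-dimensional as a $K$-vector space.

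To get at $\dim F_d^2(\mathfrak L)^G$ I would use the $GL_d(K)$-module decomposition of Lemma~\ref{description of free algebras in L}(ii),
\[
F_d^2(\mathfrak L)=\sum_{n\geq 2}\bigl(W_d(n)+W_d(n-1,1)\bigr),
\]
and compare the hook-shape summands with those appearing in the free metabelian Lie algebra. By Lemma~\ref{polynomial and metabelian Lie algebra}(ii),
\[
F_d(\mathfrak A^2)=W_d(1)+\sum_{n\geq 2}W_d(n-1,1),
\]
and the hypothesis $(KX_d)^G=0$ says $W_d(1)^G=0$, so
\[
\dim F_d(\mathfrak A^2)^G=\sum_{n\geq 2}\dim W_d(n-1,1)^G=\infty.
\]

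The key observation is that $\dim W_d(\lambda)^G$ depends only on the isomorphism class of the irreducible $GL_d(K)$-module $W_d(\lambda)$ (this is the content used in Lemma~\ref{Hilbert series of F determines this of G-invariants}). Hence the dimension of $W_d(n-1,1)^G$ computed inside $F_d(\mathfrak A^2)$ equals its dimension computed inside $F_d(\mathfrak L)$, and therefore
\[
\dim F_d^2(\mathfrak L)^G\geq\sum_{n\geq 2}\dim W_d(n-1,1)^G=\infty.
\]
Combined with $K[(A_d)_1^G]=K$, this shows $F_d^2(\mathfrak L)^G$ is not finitely generated as a $K[(A_d)_1^G]$-module, and Theorem~\ref{main theorem} gives the conclusion.

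There is essentially no obstacle here; the only thing to handle carefully is the identification of $G$-invariants across the two algebras $F_d(\mathfrak L)$ and $F_d(\mathfrak A^2)$, which is handled by the abstract remark that $\dim W_d(\lambda)^G$ is an invariant of the $GL_d(K)$-module $W_d(\lambda)$ alone. The rest is bookkeeping with the module decompositions already recorded in Lemmas~\ref{polynomial and metabelian Lie algebra} and~\ref{description of free algebras in L}.
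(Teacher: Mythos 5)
Your proof is correct and takes essentially the same route as the paper: the paper's step that $F_d({\mathfrak A}^2)$ is a $GL_d(K)$-module homomorphic image of $F_d({\mathfrak L})$ is exactly the comparison of the $W_d(n-1,1)$ multiplicities that you make explicit via the fact that $\dim W_d(\lambda)^G$ depends only on $\lambda$. Both arguments then reduce finite generation over $K[(A_d)_1^G]=K$ to finite dimensionality of $F_d^2({\mathfrak L})^G$ and invoke Theorem~\ref{main theorem}.
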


\begin{proof}
Since $(KX_d)^G=(A_d)_1^G=0$ we obtain that $F_d({\mathfrak L})^G=F_d^2({\mathfrak L})^G$. Hence the algebra $F_d({\mathfrak L})^G$
is with trivial multiplication and the finite generation is equivalent to the finite dimensionality.
As a $GL_d(K)$-module $F_d({\mathfrak A}^2)$ is a homomorphic image of $F_d({\mathfrak L})$. Hence the vector space $F_d({\mathfrak A}^2)^G$
is a homomorphic image of $F_d({\mathfrak L})^G$. This implies that $\dim F_d^2({\mathfrak L})^G=\infty$, i.e., both the algebras
$F_d({\mathfrak L})^G$ and $F_d({\mathfrak V})^G$ are not finitely generated.
\end{proof}

\begin{remark}
In Corollary \ref{metabelian Lie algebras} we cannot remove directly the restriction $(KX_d)^G=0$, as in Corollary \ref{reduction to polynomials},
because the $GL_d(K)$-submodule $I=\sum_{n\geq 2}W_d(n)$ of $F_d({\mathfrak L})$ is not an ideal. For example, one can show that
$W_d(2)(KX_d)=W_d(3)\oplus W_d(2,1)$. Hence we cannot use the property that the Lie algebra $F_d({\mathfrak A}^2)^G$ is not finitely generated
to show that the algebra $F_d({\mathfrak L})$ is also not finitely generated. On the other hand, we do not know examples
of groups $G$ when $(KX_d)^G=0$, $K[X_d]^G=K$, and $\dim F_d({\mathfrak A}^2)^G=\infty$. Such an example would show that we may apply
Corollary \ref{metabelian Lie algebras} when we cannot apply Corollary \ref{reduction to polynomials}.
\end{remark}

\section{Examples}
All examples in this section use the following statement which is a consequence of Corollary \ref{reduction to polynomials}.

\begin{proposition}\label{base of examples}
If for a subgroup $G$ of $GL_d(K)$
\[
\text{\rm transcend.deg}(K[X_d]^G)>\dim(KX_d)^G,
\]
then the algebra $F_d({\mathfrak V})^G$ is not finitely generated for any variety $\mathfrak V$ containing $\mathfrak L$.
\end{proposition}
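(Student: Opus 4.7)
The plan is to reduce to Corollary \ref{reduction to polynomials}: if I can show that $(A_d^2)^G$ fails to be finitely generated as a $K[(A_d)_1^G]$-module, then the non-finite-generation of $F_d({\mathfrak V})^G$ follows automatically for every $\mathfrak V\supseteq\mathfrak L$.

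First I would set $r=\dim(KX_d)^G$ and pick a basis $u_1,\ldots,u_r$ of $(A_d)_1^G=(KX_d)^G$. These are linearly independent linear forms in $K[X_d]$, hence algebraically independent over $K$, so $K[(A_d)_1^G]=K[u_1,\ldots,u_r]$ is a polynomial ring of transcendence degree $r$. Next I would exploit the fact that the $GL_d(K)$-action preserves the standard grading of $K[X_d]$, which gives an induced grading on the invariant ring and the decomposition
\[
K[X_d]^G=K\oplus (A_d)_1^G\oplus (A_d^2)^G.
\]

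The main step is then an argument by contradiction. Suppose $(A_d^2)^G$ is finitely generated as a $K[(A_d)_1^G]$-module. The two other summands $K$ and $(A_d)_1^G$ are trivially finitely generated over $K[(A_d)_1^G]$, so under this assumption $K[X_d]^G$ itself is a finitely generated $K[(A_d)_1^G]$-module. Hence the extension $K[(A_d)_1^G]\subseteq K[X_d]^G$ is module-finite, in particular integral, and since both rings are integral domains they share the same transcendence degree over $K$. This forces $\text{transcend.deg}(K[X_d]^G)=r=\dim(KX_d)^G$, contradicting the hypothesis. Therefore $(A_d^2)^G$ is not finitely generated over $K[(A_d)_1^G]$, and Corollary \ref{reduction to polynomials} delivers the conclusion.

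I do not anticipate any serious obstacle. The ingredients are entirely standard: module-finite extensions of integral domains preserve transcendence degree, linearly independent linear forms in $K[X_d]$ are algebraically independent, and the invariant ring inherits the grading because $G$ acts by graded automorphisms. The real content of the proposition lies in the reduction performed in Corollary \ref{reduction to polynomials}; the step carried out here is merely the commutative-algebra translation from the module-finiteness over $K[(A_d)_1^G]$ to a bound on the transcendence degree of $K[X_d]^G$.
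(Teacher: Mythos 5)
Your proof is correct and follows the paper's overall strategy---reduce to Corollary \ref{reduction to polynomials} by showing that $(A_d^2)^G$ is not a finitely generated $K[(A_d)_1^G]$-module---but the commutative-algebra step is carried out differently. The paper argues directly: since $K[X_d]^G$ is graded, one may choose $t=\text{transcend.deg}(K[X_d]^G)$ algebraically independent \emph{homogeneous} invariants in $A_d^G$; as $t>m=\dim(KX_d)^G$, at least one of them, say $f$, lies in $(A_d^2)^G$ and is transcendental over $K[(A_d)_1^G]$, so the submodule generated by the powers $f^k$ is not finitely generated (the passage from ``a submodule is not finitely generated'' to ``$(A_d^2)^G$ is not finitely generated'' uses Noetherianity of $K[(A_d)_1^G]$ and is left implicit in the paper). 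You instead run the contrapositive: finite generation of $(A_d^2)^G$ would make $K[X_d]^G=K\oplus(A_d)_1^G\oplus(A_d^2)^G$ module-finite, hence integral, over $K[(A_d)_1^G]$, forcing the two transcendence degrees to coincide. Your route is slightly cleaner---it needs neither the selection of homogeneous algebraically independent elements nor the Noetherian argument---while the paper's route is more explicit, producing a concrete witness $f$ whose powers exhibit the failure of finite generation. Both arguments are complete and correct.
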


\begin{proof}
Let $t=\text{\rm transcend.deg}(K[X_d]^G)$. Since $K[X_d]^G$ is graded, we may choose $t$ algebraically independent homogeneous elements in $A_d^G=(K[X_d]_+)^G$.
If $m=\dim(KX_d)^G$, changing linearly the variables $X_d$ we assume that
$(KX_d)^G$ has a basis $X_m=\{x_1,\ldots,x_m\}$ and $K[(A_d)_1^G]=K[X_m]$. Since $t>m$, we obtain that $((A_d)^2)^G$ contains an element $f(X_d)$, such that
the system $X_m\cup\{f(X_d)\}$ is algebraically independent. Hence the $K[X_m]$-module generated by the powers $f^k(X_d)$, $k=1,2,\ldots$, is not finitely generated.
Now the proof follows from Corollary \ref{reduction to polynomials}.
\end{proof}

\subsection{Finite groups}
\begin{theorem}\label{finite groups}
Let $G$ be a finite subgroup of $GL_d(K)$ and $G\not=\langle 1\rangle$. Then
the algebra $F_d({\mathfrak V})^G$ is not finitely generated for any variety $\mathfrak V$ containing $\mathfrak L$.
\end{theorem}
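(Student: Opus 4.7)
The plan is to derive the result as a direct consequence of Proposition~\ref{base of examples}: it suffices to verify the numerical inequality
\[
\text{transcend.deg}(K[X_d]^G) > \dim(KX_d)^G
\]
for any finite nontrivial subgroup $G\leq GL_d(K)$.

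First I would handle the left-hand side. Because $G$ is finite, the classical theorem of Emmy Noether (already cited in the introduction) guarantees that $K[X_d]^G$ is a finitely generated $K$-algebra and that $K[X_d]$ is a finite module, hence an integral extension, over $K[X_d]^G$. Integral extensions preserve Krull dimension, and for finitely generated integral domains over $K$ Krull dimension coincides with transcendence degree. Therefore
\[
\text{transcend.deg}(K[X_d]^G)=\text{transcend.deg}(K[X_d])=d.
\]

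Next I would control the right-hand side. Since $G\neq\langle 1\rangle$, there exists $g\in G$ with $g\neq\mathrm{id}$ as an element of $GL_d(K)$, i.e., acting nontrivially on the natural module $KX_d$. The fixed subspace of a single nonidentity linear operator is a proper subspace, so $\dim(KX_d)^g<d$, and hence
\[
\dim(KX_d)^G\leq \dim(KX_d)^g<d=\text{transcend.deg}(K[X_d]^G).
\]

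Combining the two estimates gives the hypothesis of Proposition~\ref{base of examples}, so $F_d({\mathfrak V})^G$ is not finitely generated for any subvariety $\mathfrak V$ of $\mathfrak R$ containing $\mathfrak L$. There is no real obstacle here; the only thing to be slightly careful about is the justification that transcendence degree equals $d$ (rather than, say, invoking a purely module-theoretic statement that would only give finite generation), and this is covered by the standard dimension argument for integral extensions of finitely generated $K$-algebras.
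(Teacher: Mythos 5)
Your proposal is correct and follows essentially the same route as the paper: both reduce to Proposition~\ref{base of examples} by showing $\text{transcend.deg}(K[X_d]^G)=d$ via the integrality of $K[X_d]$ over $K[X_d]^G$ (the paper writes out the explicit integral equation $\prod_{g\in G}(z-g(f))$, you invoke the standard dimension theory for integral extensions), and both then use $G\neq\langle 1\rangle$ to get $\dim(KX_d)^G<d$. Your explicit verification of the latter inequality is a detail the paper leaves implicit, but there is no substantive difference.
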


\begin{proof}
It is well known that for a finite group $G$
\begin{equation}\label{transcendence degree}
\text{\rm transcend.deg}(K[X_d]^G)=\text{\rm transcend.deg}(K[X_d])=d.
\end{equation}
For self-containedness of the exposition,
every element $f(X_d)\in K[X_d]$ satisfies the equation
\[
u_f(z)=\prod_{g\in G}(z-g(f(X_d)))=z^{\vert G\vert}-c_1z^{\vert G\vert-1}+c_2z^{\vert G\vert-2}-\cdots\pm c_{\vert G\vert}
\]
where the coefficients $c_k$ are equal to the elementary symmetric polynomials in $\{g(f(X_d))\mid g\in G\}$.
Hence $c_k\in K[X_d]^G$ and as a $K[X_d]^G$-module $K[X_d]$ is generated by
\[
x_1^{a_1}\cdots x_d^{a_d},\quad 0\leq a_i< \vert G\vert.
\]
The finite generation of the $K[X_d]^G$-module $K[X_d]$ implies (\ref{transcendence degree}) and the theorem follows from
Proposition \ref{base of examples}.
\end{proof}

\subsection{Reductive groups}
If $G\subset GL_d(K)$ is a reductive group then there exists a $G$-submodule $W$ of $KX_d$ such that $KX_d=(KX_d)^G\oplus W$.

\begin{proposition}\label{reductive groups}
In the above notation, if $K[W]^G\not=K$, then $F_d({\mathfrak V})^G$ is not finitely generated for all $\mathfrak V$ containing $\mathfrak L$.
\end{proposition}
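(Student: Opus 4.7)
The plan is to reduce immediately to Proposition \ref{base of examples}: it suffices to show that $\mathrm{transcend.deg}(K[X_d]^G) > \dim(KX_d)^G$. Set $m=\dim(KX_d)^G$ and choose a basis of $KX_d$ adapted to the $G$-stable decomposition $KX_d=(KX_d)^G\oplus W$. Since the symmetric algebra of a direct sum is the tensor product of the symmetric algebras, I get the $G$-equivariant identification
\[
K[X_d]\cong K[(KX_d)^G]\otimes_K K[W],
\]
where $G$ acts trivially on the first tensor factor.

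Taking $G$-invariants, and using that $G$ acts trivially on $K[(KX_d)^G]$, I get
\[
K[X_d]^G = K[(KX_d)^G]\otimes_K K[W]^G.
\]
By hypothesis $K[W]^G\neq K$, so there exists a non-constant homogeneous $f\in K[W]^G$. Since $K[W]$ is a polynomial ring (hence a domain) and $f$ is non-constant, $f$ is transcendental over $K$; hence $\mathrm{transcend.deg}(K[W]^G)\geq 1$. Because both tensor factors are integral domains that are finitely generated as $K$-algebras in the graded pieces involved, the transcendence degrees add:
\[
\mathrm{transcend.deg}(K[X_d]^G) = m + \mathrm{transcend.deg}(K[W]^G) \geq m+1 > m = \dim(KX_d)^G.
\]
An application of Proposition \ref{base of examples} then yields that $F_d(\mathfrak V)^G$ is not finitely generated for any $\mathfrak V\supset\mathfrak L$.

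The only point requiring care is the clean splitting of invariants as a tensor product, which rests on having an actual $G$-stable complement $W$ of $(KX_d)^G$ in $KX_d$; this is exactly what reductivity of $G$ provides. The rest is essentially automatic from Proposition \ref{base of examples}, so I do not anticipate any serious obstacle.
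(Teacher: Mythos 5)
Your proof is correct and follows essentially the same route as the paper: both reduce to Proposition \ref{base of examples} via the identity $\mathrm{transcend.deg}(K[X_d]^G)=\dim(KX_d)^G+\mathrm{transcend.deg}(K[W]^G)$, which your tensor-product decomposition $K[X_d]^G=K[(KX_d)^G]\otimes_K K[W]^G$ merely makes more explicit. The paper states this additivity in one line ("the elements of $K[W]$ cannot be expressed as polynomials in $(KX_d)^G$"), so your version is just a slightly more detailed write-up of the same argument.
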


\begin{proof}
Since the elements of $K[W]$ cannot be expressed as polynomials in $(KX_d)^G$, the condition $K[W]^G\not=K$ implies that
\[
\text{\rm transcend.deg}(K[X_d]^G)=\dim(KX_d)^G+\text{\rm transcend.deg}(K[W]^G)>\dim(KX_d)^G
\]
and this completes the proof in virtue of Proposition \ref{base of examples}.
\end{proof}

\begin{example} For each $k\geq 1$ there is a unique irreducible rational $k$-dimensional $SL_2(K)$-module $W_k$.
Let the subgroup $G$ of $GL_d(K)$ be isomorphic to $SL_2(K)$ and
\[
KX_d\cong W_{k_1}\oplus\cdots\oplus W_{k_p}
\]
as an $SL_2(K)$-module. It is well known that if $k_1\geq 3$, then $K[W_{k_1}]$ contains nontrivial $SL_2(K)$-invariants.
Similarly, $K[W_2\oplus W_2]^{SL_2(K)}\not=K$. Hence the only cases when $K[X_d]^{SL_2(K)}=K[(KX_d)^{SL_2(K)}]$ are
$k_1=2$, $k_2=\cdots=k_p=1$ when $K[X_d]^{SL_2(K)}=K[(KX_d)^{SL_2(K)}]\cong K[X_{d-1}]$ and
$k_1=\cdots=k_p=1$ with the trivial action of $SL_2(K)$ on $KX_d$ (and the latter case is impossible because $G\cong SL_2(K)$ is a nontrivial subgroup
of $GL_d(K)$).
\end{example}

\subsection{Weitzenb\"ock derivations}
A linear operator $\delta$ of an algebra $A$ is a {\it derivation} if
\[
\delta(uv)=\delta(u)v+u\delta(v),\quad u,v\in A.
\]
If $\mathfrak V$ is a variety of algebras, then every mapping $\delta: X_d\to F_d({\mathfrak V})$ can be uniquely extended to a derivation of
$F_d({\mathfrak V})$ which we shall denote by the same symbol $\delta$. If $\delta$ is a nilpotent linear operator on $KX_d$, then the
induced derivation is called a {\it Weitzenb\"ock derivation}. Weitzenb\"ock \cite{W} proved that in the case of polynomial algebras the
{\it algebra of constants}
\[
K[X_d]^{\delta}=\{f(X_d)\in K[X_d]\mid \delta(f(X_d))=0\}
\]
is finitely generated. Details on the algebra of constants $K[X_d]^{\delta}$ can be found in the book by Nowicki \cite{Now}.
For varieties $\mathfrak V$ of unitary associative algebras (and $\delta\not=0$) the algebra
$F_d({\mathfrak V})^{\delta}$ is finitely generated if and only if $\mathfrak V$ does not contain the algebra $T_2(K)$ of $2\times 2$ upper triangular matrices,
see \cite{Dr4, DrG}. Up to a change of the basis of $KX_d$ the Weitzenb\"ock derivation $\delta$ is determined by the Jordan normal form $J(\delta)$ of
the linear operator $\delta$ acting on $KX_d$. Since $\delta$ acts nilpotently on $KX_d$, the matrix $J(\delta)$ consists of Jordan blocks with zero
diagonals.

\begin{proposition}\label{Weitzenboeck}
If $d>2$ and the Jordan normal form $J(\delta)$ of the Weitzenb\"ock derivation consists of less than $d-1$ blocks, then the algebra
$F_d({\mathfrak V})^{\delta}$ is not finitely generated for any variety $\mathfrak V$ containing the variety $\mathfrak L$.
\end{proposition}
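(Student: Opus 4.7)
The plan is to invoke Proposition \ref{base of examples} by recognizing the Weitzenb\"ock derivation $\delta$ as the infinitesimal generator of a nontrivial one-parameter subgroup of $GL_d(K)$. Since $\delta$ acts nilpotently on $KX_d$, the exponentials $g_t := \exp(t\delta)$ are well-defined elements of $GL_d(K)$ for every $t \in K$, and $G := \{g_t \mid t \in K\}$ is a subgroup of $GL_d(K)$ isomorphic to the additive group $(K,+)$. The hypothesis that $J(\delta)$ has fewer than $d-1$ blocks, combined with $d>2$, forces $\delta \neq 0$ (otherwise $J(\delta)$ would consist of $d$ blocks, all of size $1$), and in particular $G \neq \langle 1\rangle$. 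Extending $\delta$ via the universal property of $F_d(\mathfrak V)$ to a derivation $\widehat\delta$ of $F_d(\mathfrak V)$ and using $\mathrm{char}(K)=0$, the diagonal action of $g_t$ on $F_d(\mathfrak V)$ coincides with $\exp(t\widehat\delta)$, so $F_d(\mathfrak V)^\delta = F_d(\mathfrak V)^G$, and similarly $K[X_d]^\delta = K[X_d]^G$ and $(KX_d)^\delta = (KX_d)^G$.

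Next I would compute the two quantities that appear in Proposition \ref{base of examples}. On one hand, $(KX_d)^G = \ker(\delta\vert_{KX_d})$, and the kernel of a nilpotent linear operator has dimension equal to its number of Jordan blocks; hence $\dim(KX_d)^G$ equals the number of blocks of $J(\delta)$, which by assumption is strictly less than $d-1$. On the other hand, for any nonzero Weitzenb\"ock derivation $\delta$ one has
\[
\text{\rm transcend.deg}(K[X_d]^\delta) = d-1.
\]
This is the classical fact that a nontrivial regular action of the additive group $G_a$ on affine $d$-space has a one-dimensional generic orbit, so its ring of invariants has transcendence degree $d-1$; it can also be read off from the description of $K[X_d]^\delta$ in Nowicki's book \cite{Now} together with Weitzenb\"ock's theorem cited earlier in this section.

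Combining the two calculations gives $\text{\rm transcend.deg}(K[X_d]^G) = d-1 > \dim(KX_d)^G$, and Proposition \ref{base of examples} immediately yields that $F_d(\mathfrak V)^G = F_d(\mathfrak V)^\delta$ is not finitely generated for any variety $\mathfrak V$ containing $\mathfrak L$. The only nontrivial ingredient is the transcendence degree statement in the second paragraph; everything else is bookkeeping plus the standard identification, valid in characteristic zero, of the constants of a nilpotent derivation with the invariants of the associated $G_a$-action.
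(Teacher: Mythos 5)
Your proposal is correct and follows essentially the same route as the paper: exponentiate $\delta$ to a one-parameter unipotent subgroup (the paper calls it $UT_2(K)$, you call it $(K,+)$ -- the same group), identify $\dim(KX_d)^{\delta}$ with the number of Jordan blocks, quote $\operatorname{transcend.deg}(K[X_d]^{\delta})=d-1$ from Nowicki, and apply Proposition \ref{base of examples}. The only difference is cosmetic: you additionally spell out why the block hypothesis forces $\delta\neq 0$, which the paper leaves implicit.
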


\begin{proof}
Since $\alpha\delta$, $\alpha\in K$, is nilpotent on $KX_d$, it is a {\it locally nilpotent derivation} of $F_d({\mathfrak V})$, i.e.,
for every $f(X_d)\in F_d({\mathfrak V})$ there exists an $n\geq 1$ such that $(\alpha\delta)^n(f(X_d))=0$.
Hence
\[
\exp(\alpha\delta)=1+\frac{\alpha\delta}{1!}+\frac{(\alpha\delta)^2}{2!}+\cdots
\]
is a well defined linear automorphism of $F_d({\mathfrak V})$. It is well known that the group
\[
\{\exp(\alpha\delta)\mid \alpha\in K\}
\]
is isomorphic to the unipotent group $UT_2(K)$ and
\[
F_d({\mathfrak V})^{\delta}=F_d({\mathfrak V})^{UT_2(K)}.
\]
If the matrix $J(\delta)$ consists of $p$ blocks, then the dimension of the vector space $(KX_d)^{\delta}$ of the linear constants is equal to
the number of the blocks $p$. Reading carefully \cite[Proposition 6.5.1, p. 65]{Now} we can see that
\[
\text{transcend.deg}(K[X_d]^{\delta})=d-1
\]
which is larger than $p=\dim(KX_d)^{\delta}$.
Now the proof follows from Proposition \ref{base of examples} applied for $UT_2(K)\subset GL_d(K)$.
\end{proof}

\begin{remark} If in Proposition \ref{Weitzenboeck} the Jordan normal form of $\delta$ consists of $d-1$ blocks, then
the algebra of constants $K[X_d]^{\delta}$ is generated by linear constants. In this case we may assume that
$\delta(x_1)=x_2$ and $\delta(x_i)=0$ for $i=2,\ldots,d$.
It is easy to see that $F_d({\mathfrak L})^{\delta}$ is generated by $x_1x_2-x_2x_1,x_2,\ldots,x_d$.
We do not know how far can be lifted to $F_d({\mathfrak V})$ the finite generation property of the algebra of constants
and do not have a description of the varieties $\mathfrak V$ containing $\mathfrak L$ such that the algebra $F_d({\mathfrak V})^{\delta}$
is finitely generated.
\end{remark}

\end{document}